\theoremstyle{plain}
\newtheorem{thm}{Theorem}[section]
\newtheorem{theorem}[thm]{Theorem}
\newtheorem{lemma}[thm]{Lemma}
\newtheorem{proposition}[thm]{Proposition}
\newtheorem{corollary}[thm]{Corollary}
\theoremstyle{definition}
\newtheorem{remark}[thm]{Remark}
\newtheorem{example}[thm]{Example}
\newtheorem{thevarthm}[thm]{\varthmname}
\newenvironment{varthm}[1]{\def\varthmname{#1}\begin{thevarthm}}{\end{thevarthm}\def\varthmname{}}
\newenvironment{varthm*}[1]{\trivlist\item[]{\bf #1.}\it}{\endtrivlist}
\renewcommand\geq{\geqslant}
\renewcommand\leq{\leqslant}
\newcommand\be{\begin{eqnarray*}}
\newcommand\ee{\end{eqnarray*}}
\newcommand\Q{\mathbb Q}
\newcommand\R{\mathbb R}
\newcommand\N{\mathbb N}
\newcommand\E{\mathbb E}
\renewcommand\P{\mathbb P}
\newcommand\calo{{\mathcal O}}
\newcommand\calf{{\mathcal F}}
\newcommand\eps{\varepsilon}
\newcommand\newop[2]{\def#1{\mathop{\rm #2}\nolimits}}
\newop\log{log}
\newop\ord{ord}
\newop\Gal{Gal}
\newop\SL{SL}
\newop\Bl{Bl}
\newop\mult{mult}
\newop\mass{mass}
\newop\div{div}
\newop\codim{codim}
\newop\sing{sing}
\newop\vdim{vdim}
\newop\edim{edim}
\newop\Ass{Ass}
\newop\size{size}
\newcommand\eqnref[1]{(\ref{#1})}
\newcommand\fbul{\calf_{\bullet}}
\newcommand\vbul{V_{\bullet}}
\newcommand\ybul{Y_{\bullet}}
\newcommand\emax{e_{\max}}
\newcommand\phifbul{\varphi_{\fbul}}
\newcommand\wtilde[1]{\widetilde{#1}}
\newcommand\eqdef{=_{\rm def}}
\newcommand{\deq}{\ensuremath{ \stackrel{\textrm{def}}{=}}}
\newcommand{\equ}{\ensuremath{\,=\,}}
\newcommand{\st}[1]{\ensuremath{ \left\{ #1 \right\} }}
\newcommand\dybul{\Delta_{\ybul}}
\newcommand\calfbul{\calf_{\bullet}}
\newcommand{\HH}[3]{\ensuremath{H^{#1}\left(#2,#3\right)}}
\newcommand{\OO}{\ensuremath{\mathcal O}}
\DeclareMathOperator{\divv}{div}
\newcommand\beginproof[1]{\trivlist\item[\hskip\labelsep{\em #1.}]}
\renewcommand\proof{\beginproof{Proof}}
\newcommand\proofof[1]{\beginproof{Proof of #1}}
\def\endproof{\hspace*{\fill}\endproofsymbol\endtrivlist}
\def\endproofsymbol{\frame{\rule[0pt]{0pt}{6pt}\rule[0pt]{6pt}{0pt}}}
\def\keywordname{{\bfseries Keywords}}%
\def\keywords#1{\par\addvspace\medskipamount{\rightskip=0pt plus1cm
\def\and{\ifhmode\unskip\nobreak\fi\ $\cdot$
}\noindent\keywordname\enspace\ignorespaces#1\par}}
\def\subclassname{{\bfseries Mathematics Subject Classification
(2000)}\enspace}
\def\subclass#1{\par\addvspace\medskipamount{\rightskip=0pt plus1cm
\def\and{\ifhmode\unskip\nobreak\fi\ $\cdot$
}\noindent\subclassname\ignorespaces#1\par}}
\begin{document}

\author{M.~Dumnicki, A.~K\"uronya\footnote{During this project Alex K\"uronya was partially supported by  DFG-Forschergruppe 790 ``Classification of Algebraic Surfaces and Compact Complex Manifolds'',
and the OTKA Grants 77476 and  81203 by the Hungarian Academy of Sciences.}, C.~Maclean, T.~Szemberg\footnote{Szemberg research was partially supported
by NCN grant UMO-2011/01/B/ST1/04875}}
\title{Seshadri constants via Okounkov functions and the   Segre-Harbourne-Gimigliano-Hirschowitz Conjecture}
\date{\today}
\maketitle
\thispagestyle{empty}

\begin{abstract}
   In this paper we relate the SHGH Conjecture to the rationality of one-point Seshadri constants
   on blow ups of the projective plane, and explain how rationality
   of Seshadri constants can be tested with the help of functions on Newton--Okounkov bodies.
\keywords{Nagata Conjecture, SHGH Conjecture, Seshadri constants, Okounkov bodies}
\subclass{MSC 14C20}
\end{abstract}


\section{Introduction}
   Nagata's conjecture and its generalizations have been a central problem in the theory of surfaces for many years, and much work
   has been done towards verifying them \cite{Nag59}, \cite{Cil00}, \cite{Har01}, \cite{StrSze04}, \cite{CHMR12}. In this paper we open
a new line of attack in which we relate Nagata-type statements
   to the rationality of one-point Seshadri constants and invariants of functions on Newton--Okounkov bodies. We obtain as a consequence of our approach
   some evidence that certain Nagata-type questions might be false.

   Seshadri constants were first introduced by Demailly in the course of his work on Fujita's conjecture \cite{Dem92}
   in the late 80's and have been the object of considerable interest ever
since.
   Recall that given a smooth projective variety $X$ and a nef line bundle $L$ on
   $X$, the Seshadri constant of $L$
   at a point $x\in X$ is the real number
   \begin{equation}\label{eq:sesh def}
      \eps(L;x)\eqdef\inf_C\frac{L\cdot C}{\mult_xC} \ ,
   \end{equation}
   where the infimum is taken over all irreducible
   curves passing through $x$.
An intriguing and notoriously difficult problem about Seshadri constants
on surfaces is the question whether these invariants are rational numbers,
see \cite[Remark 5.1.13]{PAG}
It follows quickly from their definition that if a Seshadri constant is
irrational then it must be $\eps(L;x)=\sqrt{L^2}$, see e.g.
\cite[Theorem 2.1.5]{PSC}. It is also known that Seshadri constants of a fixed
line bundle $L$, take their maximal value on a subset in $X$ which is a
complement of at most countably many Zariski closed proper subsets of $X$.

We denote this maximum by $\eps(L,1)$. Similar notation is used for multi-point
Seshadri constants, see \cite[Definition 1.9]{PSC}.
In particular, if $\eps(L;x)=\sqrt{L^2}$ at some point the same
holds in a  very general point on $X$ and $\eps(L;1)=\sqrt{L^2}$.

   From a slightly different point of view, Seshadri constants reveal
information on the structure of the nef cone on the blow-up of $X$ at $x$,
   hence their study is closely related to our attempts
to understand Mori cones of surfaces.

   An even older problem concerning linear series on algebraic
   surfaces is the conjecture formulated by Beniamino Segre in 1961
   and rediscovered, made more precise and reformulated by Harbourne 1986,
   Gimigliano 1987 and Hirschowitz 1988. (See \cite{Cil00} for a very nice
   account on this development and related subjects.) In particular it is known,
   \cite[Remark 5.12]{Cil00} that the
SHGH Conjecture implies the Nagata Conjecture.
   We now recall this conjecture, using by Gimigliano's formuation,
which will be the most convenient form for us \cite[Conjecture 3.3]{Gim89}.
\begin{varthm*}{SHGH Conjecture}
   Let $X$ be the blow up of the projective plane $P^2$ in $s$ general points
   with exceptional divisors $E_1,\dots,E_s$. Let $H$ denote the pullback
   to $X$ of the hyperplane bundle $\calo_{\P^2}(1)$ on $\P^2$. Let
   the integers $d,m_1\geq \dots \geq m_s\geq -1$ with $d\geq m_1+m_2+m_3$
   be given. Then the line bundle
   $$dH-\sum_{i=1}^s m_iE_i$$
   is non-special.
\end{varthm*}
   The main result of this note is the following somewhat unexpected relation
   between the SHGH Conjecture and the rationality problem for Seshadri constants.
\begin{varthm}{Theorem}\label{thm:main}
   Let $s\geq 9$ be an integer for which the SHGH Conjecture holds true.
   Let $X$ be the blow up of the projective plane $\P^2$ in $s$ general
   points. Then
   \begin{itemize}
      \item[a)] either there exists on $X$ an ample line bundle whose Seshadri constant
      at a very general point is irrational;
      \item[b)] or the SHGH Conjecture fails for $s+1$ points.
   \end{itemize}
\end{varthm}
   Note that it is known that the SHGH conjecture holds true for $s\leq 9$, \cite[Theorem 5.1]{Cil00}.
   It is also known that Seshadri constants of ample line bundles on del Pezzo surfaces
   (i.e. for $s\leq 8$) are rational, see \cite[Theorem 1.6]{San}. In any case, the
   statement of the Theorem is interesting (and non-empty) for $s=9$.
   (See the challenge at the end of the article.)

\begin{corollary}
 If all one-point Seshadri constants on the blow-up of $\P^2$ in nine general points are rational, then the SHGH conjecture fails for ten points.
\end{corollary}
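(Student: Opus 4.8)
The plan is to obtain this Corollary as a direct specialization of Theorem~\ref{thm:main} to the case $s=9$, so the work is essentially just checking that the hypotheses are met and then reading off which of the two alternatives survives. First I would verify that Theorem~\ref{thm:main} applies with $s=9$. Its only standing hypothesis is that $s\geq 9$ is an integer for which the SHGH Conjecture holds, and indeed the SHGH Conjecture is known to be true for nine general points by \cite[Theorem 5.1]{Cil00}. Taking $X$ to be the blow up of $\P^2$ at these nine general points, the dichotomy (a)/(b) of the Theorem therefore applies verbatim.

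Next I would bring in the hypothesis of the Corollary and use it to eliminate alternative (a). The assumption is that \emph{all} one-point Seshadri constants on $X$ are rational; that is, $\eps(L;x)\in\Q$ for every ample line bundle $L$ on $X$ and every point $x\in X$. In particular this holds when $x$ is a very general point, so there can be no ample line bundle on $X$ whose Seshadri constant at a very general point is irrational. This is precisely the negation of alternative (a).

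Since Theorem~\ref{thm:main} guarantees that at least one of (a) and (b) must hold, and (a) has just been excluded, alternative (b) is forced: the SHGH Conjecture fails for $s+1=10$ points. This is exactly the assertion of the Corollary.

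The argument is entirely formal, so there is no real obstacle; the one point requiring a moment's care — and the step I would flag explicitly — is the quantifier matching between the two statements. Alternative (a) concerns a \emph{very general} point, whereas the Corollary hypothesizes rationality at \emph{all} points. The implication we need runs in the easy direction: rationality of every one-point Seshadri constant certainly implies rationality at a very general point, which is all that is required to negate (a).
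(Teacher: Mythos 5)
Your proposal is correct and matches the paper's intended argument exactly: the corollary is the specialization of Theorem~\ref{thm:main} to $s=9$, using the fact (noted just before the corollary, citing \cite[Theorem 5.1]{Cil00}) that SHGH holds for nine points, and then excluding alternative (a) via the rationality hypothesis. The quantifier point you flag (all points versus a very general point) is handled correctly and in the easy direction.
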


   An interesting feature of our proof is that the role played by the general position of the points at which we blow up becomes clear.

   In a different direction, we study the connection between functions on
Newton--Okounkov bodies defined by orders of vanishing, and Seshadri-type
   invariants. Our main result along these lines is the following.

   \begin{varthm}{Theorem}\label{thm:max and rtl}
    Let $X$ be a smooth projective surface, $\ybul$ an admissible flag,  $L$ a big line bundle on $X$, and let $P\in X$ be an arbitrary point.
    \[
    \text{If\ \ } \max_{x\in\Delta_{\ybul}(L)} \phi_{\ord_p}(x) \in \Q\ \ \text{then}\ \ \eps(L,P) \in \Q\ .
    \]

   \end{varthm}

\paragraph*{Acknowledgements.}  We thank Cristiano Bocci, S\'ebastien Boucksom and Patrick Graf for helpful discussions.
Part of this work was done while the second author
was visiting the Uniwersytet Pedagogiczny in Cracow. We would like to thank
the Uniwersytet Pedagogiczny for the excellent working conditions.

\section{Rationality of one point Seshadri constants and the SHGH Conjecture}
In this section we prove Theorem \ref{thm:main}: we start with notation and
preliminary leamms.
   Let $f:X\to\P^2$ be the blow up of $\P^2$ at $s\geq 9$ general points
   $P_1,\dots,P_s$
   with exceptional divisors $E_1,\ldots,E_s$.
   We denote as usual by $H=f^*(\calo_{\P^2}(1))$ the pull back
   of the hyperplane bundle and we let
   $\E=E_1+\dots+E_s$ be the sum of exceptional divisors.
We consider the blow up $g:Y\to X$ of $X$ at $P$ with exceptional divisor $F$.
   Whilst The following result is
   well known, we include it for the lack of a proper reference.
\begin{lemma}\label{lem:almost homog div}
   If there exists a curve $C\subset X$ in the linear system $dH-\sum_{i=1}^sm_iE_i$
   computing Seshadri constant of a $\Q$-line bundle $L=H-\alpha\E$, then
   there exists a divisor $\Gamma$ with $\mult_{P_1}\Gamma=\ldots=\mult_{P_s}\Gamma=M$
   computing Seshadri constant of $L$ at $P$, i.e.
   $$\frac{L\cdot\Gamma}{\mult_P\Gamma}=\frac{L\cdot C}{\mult_PC}=\eps(L;P).$$
\end{lemma}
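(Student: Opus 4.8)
The plan is to \emph{symmetrize} the given curve $C$, exploiting the permutation symmetry of the configuration together with the fact that the $\Q$-line bundle $L=H-\alpha\E$ is invariant under any permutation of the exceptional divisors $E_1,\dots,E_s$. Since the points $P_1,\dots,P_s$ are general, I would use the monodromy of the universal family of blow-ups: fixing $P$ and letting the $s$ points travel along loops in the configuration space of $s$ distinct points of $\P^2\setminus\{P\}$, the monodromy group surjects onto the full symmetric group $S_s$, acting on $\mathrm{Pic}(X)$ by permuting the $E_i$ while fixing $H$, $P$, and therefore $L$.

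First I would record the intersection-theoretic facts that drive the argument. Writing the class of $C$ as $dH-\sum_i m_iE_i$ and using $H^2=1$, $H\cdot E_i=0$, $E_i\cdot E_j=-\delta_{ij}$, one finds $L\cdot C=d-\alpha\sum_i m_i$, a quantity manifestly unchanged under permuting the $m_i$. Next, for each $\sigma\in S_s$ I would transport $C$ along a loop realizing $\sigma$ to obtain an (irreducible, effective) curve $C_\sigma$ in the class $dH-\sum_i m_{\sigma^{-1}(i)}E_i$, with $\mult_P C_\sigma=\mult_P C$ because the local multiplicity at the tracked point $P$ is a deformation invariant, and $L\cdot C_\sigma=L\cdot C$ by the symmetry just noted. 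Hence each $C_\sigma$ again satisfies $\tfrac{L\cdot C_\sigma}{\mult_P C_\sigma}=\eps(L;P)$, that is, it computes the Seshadri constant.

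I would then set $\Gamma=\sum_{\sigma\in S_s}C_\sigma$. Additivity of local multiplicities gives $\mult_{P_i}\Gamma=\sum_\sigma m_{\sigma^{-1}(i)}=(s-1)!\sum_j m_j=:M$, the \emph{same} value $M$ for every index $i$, since each $m_j$ occurs exactly $(s-1)!$ times as $\sigma$ runs over $S_s$. Finally, because $L\cdot C_\sigma=\eps(L;P)\cdot\mult_P C_\sigma$ for every $\sigma$, the mediant computation
\[
\frac{L\cdot\Gamma}{\mult_P\Gamma}=\frac{\sum_\sigma L\cdot C_\sigma}{\sum_\sigma \mult_P C_\sigma}=\eps(L;P)
\]
shows that $\Gamma$ still computes the Seshadri constant of $L$ at $P$, which is exactly the assertion.

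The step I expect to be the main obstacle is the monodromy transport in the second paragraph: one must verify that a loop in configuration space can permute $P_1,\dots,P_s$ arbitrarily \emph{while keeping $P$ fixed}, and that both $\eps(L;P)$ and the local multiplicity $\mult_P C$ are genuinely invariant along such a loop. The first point rests on the surjectivity of the monodromy of the configuration space of $s$ points in $\P^2\setminus\{P\}$ onto $S_s$; the second uses that $P$ is very general, so that $\eps(L;P)$ equals its maximal value $\eps(L;1)$ and is constant along the loop, while effective cycles are carried to effective cycles with the same local multiplicity at the tracked point.
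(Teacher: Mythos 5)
Your proposal is correct and follows essentially the same symmetrization argument as the paper: produce the permuted curves $C_\sigma$ (the paper justifies their existence and the constancy of $\mult_P$ by the generality of the points, which is exactly the monodromy argument you spell out), sum them to get a divisor with equal multiplicities at $P_1,\dots,P_s$, and conclude by the same mediant computation. The only cosmetic difference is that you sum over all of $S_s$ while the paper sums over a cyclic subgroup of order $s$, which changes nothing essential.
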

\proof
   Since the points $P_1,\dots,P_s$ are general, there exist curves
   $$C_{\sigma}=dH-\sum_{j=1}^s m_{\sigma(j)}E_j$$
   for all permutations $\sigma\in\Sigma_s$. Since the point $P$ is general,
we may take all these curves to have the same multiplicity $m$ at $P$.
   Summing over a cycle $\sigma$
   of length $s$ in $\Sigma_s$, we
   obtain a divisor
   $$\Gamma=\sum\limits_{i=1}^s C_{\sigma^i}=sdH-\sum\limits_{i=1}^s \sum\limits_{j=1}^s m_{\sigma^i(j)}E_j=sd H- M\E,$$
   with $M=m_1+\ldots+m_s$.
   Note that the multiplicity of $\Gamma$ at $P$ equals $sm$.
   Taking the Seshadri quotient for $\Gamma$ we have
   $$\frac{L\cdot \Gamma}{sm}=\frac{sd-\alpha s M}{sm}=\frac{d-\alpha M}{m}=\eps(L;P)$$
   hence $\Gamma$ satisfies the assertions of the Lemma.
\endproof
   The following auxiliary Lemma will be used in the proof
   of Theorem \ref{thm:main}. We postpone its proof to the end of this section.
\begin{lemma}\label{lem:estimate for cremona ok case}
   Let $s \geq 9$ be an integer.
   The function
   \begin{equation}\label{twosides}
      f(\delta)=(2\sqrt{s+1}-s)\sqrt{1-s\delta^2}+s(1-\sqrt{s+1})\delta+s-2
   \end{equation}
   is non-negative for $\delta$ satisfying
   \begin{equation}\label{eq:cond on delta}
      \frac{1}{\sqrt{s+1}}<\delta<\frac{1}{\sqrt{s}}.
   \end{equation}
\end{lemma}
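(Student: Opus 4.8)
The plan is to exploit the special structure of $f$: it is the sum of a linear function of $\delta$ and a constant multiple of the concave function $g(\delta)=\sqrt{1-s\delta^2}$. First I would record the signs of the coefficients. Since $s\geq 9$ forces $4(s+1)<s^2$, the leading coefficient $2\sqrt{s+1}-s$ is negative; the coefficient $s(1-\sqrt{s+1})$ of the linear term is plainly negative as well, while $s-2>0$. Writing
\[
f(\delta)=(2\sqrt{s+1}-s)\,g(\delta)+s(1-\sqrt{s+1})\delta+s-2,
\]
and noting that $g''(\delta)<0$ throughout the interval, the negative factor $2\sqrt{s+1}-s$ makes $f$ \emph{convex} there (the linear part contributes nothing to $f''$). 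This convexity is the structural fact that drives the whole argument.

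Next I would examine the left endpoint $\delta_0=1/\sqrt{s+1}$, where $g(\delta_0)=\sqrt{1-s/(s+1)}=1/\sqrt{s+1}$. A direct substitution shows that the two terms carrying the factor $1/\sqrt{s+1}$ combine to $2-s$, which exactly cancels $s-2$, so that $f(\delta_0)=0$. I would then compute the derivative at $\delta_0$. Using $g'(\delta)=-s\delta/\sqrt{1-s\delta^2}$ one finds $g'(\delta_0)=-s$, whence
\[
f'(\delta_0)=(2\sqrt{s+1}-s)(-s)+s(1-\sqrt{s+1})=s\bigl(s+1-3\sqrt{s+1}\bigr)=s\sqrt{s+1}\bigl(\sqrt{s+1}-3\bigr).
\]
For $s\geq 9$ we have $\sqrt{s+1}\geq\sqrt{10}>3$, and therefore $f'(\delta_0)>0$.

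Finally I would combine the two observations. Because $f$ is convex on the interval \eqnref{eq:cond on delta}, its derivative $f'$ is non-decreasing, so $f'(\delta)\geq f'(\delta_0)>0$ throughout; hence $f$ is strictly increasing. Since $f(\delta_0)=0$, this yields $f(\delta)>0$ for every $\delta$ with $1/\sqrt{s+1}<\delta<1/\sqrt{s}$, which is the assertion (indeed a little more). I expect the only genuinely load-bearing point to be the hypothesis $s\geq 9$: it is precisely what makes $\sqrt{s+1}>3$, and thus forces $f'(\delta_0)>0$. Everything else is routine calculus; in particular no separate analysis of the right endpoint $1/\sqrt{s}$ (where $g'$ blows up) is needed, since monotonicity from the left endpoint already settles the sign.
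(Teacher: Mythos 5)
Your proof is correct and follows essentially the same route as the paper: both verify $f(1/\sqrt{s+1})=0$ and then show $f'>0$ on the interval by observing that $f'$ is non-decreasing (the paper phrases this via the monotonicity of $h(\delta)=\delta/\sqrt{1-s\delta^2}$, you via convexity of $f$) and that $f'(1/\sqrt{s+1})=s(s+1-3\sqrt{s+1})>0$ for $s\geq 9$. The lower bound $s\left(1+s-3\sqrt{s+1}\right)$ appearing in the paper is exactly your $f'(\delta_0)$, so the two arguments coincide.
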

\proofof{Theorem \ref{thm:main}}
   Let $\delta$ be a rational number satisfying \eqnref{eq:cond on delta}.
   Note that the SHGH Conjecture implies the Nagata Conjecture \cite[Remark 5.12]{Cil00}
   so that
   $$\eps(\calo_{\P^2}(1);s)=\frac1{\sqrt{s}}$$
   and hence the $\Q$--divisor $L=H-\delta\E$ is ample.
   If $\eps(L;1)$ is irrational, then we are done.

   So we proceed assuming that $\eps(L;1)$ is rational
   and that it is not equal to $\sqrt{L^2}$ (this can be achieved
   changing $\delta$ a little bit if necessary).
   In particular, by Lemma \ref{lem:almost homog div}
for a general point $P\in X$ Seshadri constant $\eps(L;P)$  there is
   a divisor $\Gamma\subset\P^2$ of degree $\gamma$ with
$M=\mult_{P_1}\Gamma=\ldots=\mult_{P_s}\gamma$
   and $m=\mult_{P}\Gamma$
   whose proper transform $\wtilde{\Gamma}$ on $X$ computes the Seshadri
   constant
   $$\eps(L;P)=\frac{L\cdot \wtilde{\Gamma}}{m}=\frac{\gamma - \delta s M}{m}<\sqrt{1-s\delta^2}.$$
   This gives an upper bound on $\gamma$
   \begin{equation}\label{eq:upper bound on gamma}
      \gamma<m\sqrt{1-s\delta^2}+\delta s M.
   \end{equation}
   We need to prove that statement b) in Theorem \ref{thm:main} holds.
Suppose not: the SHGH Conjecture then holds for $s+1$ points in $\P^2$.
The Nagata Conjecture then also holds for $s+1$ points and this gives
   a lower bound for $\gamma$, since for $\Gamma$ we must have that
   \begin{equation}\label{eq:lower bound on gamma}
      \frac{\gamma}{sM+m}\geq\frac{1}{\sqrt{s+1}}.
   \end{equation}
We now claim that
   \begin{equation}\label{eq:no cremona}
      \gamma\geq 2M+m.
   \end{equation}
Suppose not. We then have that
   \begin{equation}\label{eq:no no cremona}
      \gamma< 2M+m.
   \end{equation}
   The real numbers
   $$a:=\frac{2\sqrt{s+1}-s}{2-\delta s}\;\mbox{ and }\; b:=\frac{s-\delta s\sqrt{s+1}}{2-\delta s}$$
   are positive. Multiplying \eqnref{eq:upper bound on gamma} by $a$ and \eqnref{eq:no no cremona} by $b$
   and adding we obtain
   $$sM+m\leq \gamma\sqrt{s+1}< sM+(b+a\sqrt{1-s\delta^2})m,$$
   where the first inequality follows from \eqnref{eq:lower bound on gamma}.
   Subtracting $sM$ in the left and in the right term and dividing by $m$
   we obtain
   $$1<b+a\sqrt{1-s\delta^2}.$$
   Plugging in the definition of $a$ and $b$
   and rearranging terms we obtain that
   $$(2\sqrt{s+1}-s)\sqrt{1-s\delta^2}+s-\delta s\sqrt{s+1}<2-\delta s,$$
   which contradicts Lemma \ref{lem:estimate for cremona ok case}.
   Hence \eqnref{eq:no cremona} holds.

   It follows now from the SHGH conjecture
   for $s+1$ points (in the form stated in the introduction) that
   the linear system
   $$\gamma H-M\E-m F$$
   on $Y$ is non-special.
   Indeed the condition
   $\gamma\geq 2M+m$ is \eqnref{eq:no cremona} and the condition $\gamma\geq 3M$ is satisfied since $\frac{\gamma}{sM}>\frac1{\sqrt{s}}$
   (because the Nagata Conjecture holds for $s$ by hypothesis)
   and because we have assumed that $s\geq 9$.
   This system is also non-empty because the proper transform of $\Gamma$
   under $g$ is its member. Thus by a standard dimension count
   $$0\leq \gamma(\gamma+3)-sM(M+1)-m(m+1).$$
   The upper bound on $\gamma$ \eqnref{eq:upper bound on gamma}
   together with the above inequality yields
   \begin{equation}\label{eq:upper bound on dim C}
      0\leq
     (s\delta M +m\sqrt{1-s\delta^2})(s \delta M +m\sqrt{1-s\delta^2}+3)-m^2-m-sM-sM^2.
   \end{equation}
   Note that the quadratic term in \eqnref{eq:upper bound on dim C} is
   a negative semi-definite form
   $$(s^2\delta^2-s)M^2+2s\delta\sqrt{1-s\delta^2}Mm-s\delta^2m^2.$$
   Indeed, the restrictions on $\delta$ made in \eqnref{eq:cond on delta}
   imply that the term at $M^2$ is negative. The determinant
   of the associated symmetric matrix vanishes. These two conditions
   imply together that the form is negative semi-definite. In particular
   this term of \eqnref{eq:upper bound on dim C} is non-positive.
   The linear part in turn is
   $$(3s\delta-s)M+(3\sqrt{1-s\delta^2}-1)m,$$
   which is easily seen to be negative. This provides the desired contradiction
   and finishes the proof of the Theorem.
\endproof

\begin{remark}
As it is well known, Nagata's conjecture can be interpreted in terms of the nef and Mori cones of the blow-up $X$ of $\P^2$ at $s$ general points. More
precisely, consider the following question: for what $t\geq 0$ does
the ray $H-t\E$ meet the boundary of the nef cone?
The conjecture predicts that this ray should intersect the boundaries
of the nef cone and the effective cone at the same time.
\end{remark}
Considering  the Zariski chamber structure of $X$ (see \cite{BKS04}), we see
that this is equivalent to requiring that $H-t\E$ crosses
exactly one Zariski chamber (the
nef cone itself). Surprisingly, it is easy to prove that
$H-t\E$ cannot cross more than two chambers.
\begin{proposition}\label{prop:only 2 chambers}
   Let $f:X\to\P^2$ be the blow up of $P^2$ in $s$ general points with
exceptional divisors $E_1,\dots,E_s$.
   Let $H$ be the pull-back of the hyperplane bundle and $\E=E_1+\ldots+E_s$.
   The ray $R=H-t\E$ meets at most two Zariski chambers on $X$.
\end{proposition}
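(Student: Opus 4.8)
The plan is to exploit the symmetry of the ray $R=H-t\E$ and collapse the whole question to a two-dimensional picture. Although for $s$ general points there is no genuine automorphism of $X$ permuting the $E_i$, there is a monodromy action of $\Sigma_s$ on $N^1(X)$ which fixes $H$, permutes the $E_i$, and preserves the intersection pairing together with the pseudoeffective (equivalently the nef) cone, since it permutes the classes of the irreducible negative curves. Because the Zariski decomposition of a big class is determined by these numerical data and $R$ is fixed by the action, both the positive part $P(t)$ and the negative part $N(t)$ of $R(t)=H-t\E$ are invariant, hence lie in the invariant plane $V=\langle H,\E\rangle$. On $V$ the form is $\mathrm{diag}(1,-s)$ (from $H^2=1$, $H\cdot\E=0$, $\E^2=-s$), and for a curve $C=dH-\sum m_iE_i$ with $M=\sum m_i$ one computes $(aH+b\E)\cdot C=ad+bM$. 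Consequently a class $aH+b\E$ with $a>0$ is nef exactly when $-\eps a\le b\le 0$, where $\eps=\eps(\calo_{\P^2}(1);s)=\inf_C d/M$ is the multi-point Seshadri constant, while $R(t)$ is big precisely for $t<1/\sqrt s$.

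Next I would cut the ray at these thresholds. For $0\le t\le\eps$ the class $R(t)$ is nef, so $N(t)=0$ and we sit in the nef chamber; if $\eps=1/\sqrt s$ there is nothing more to prove, so assume $\eps<1/\sqrt s$ and fix $t\in(\eps,1/\sqrt s)$, where $R(t)$ is big but not nef. The crucial step is to pin the positive part. Since $P(t)\in V$ is nef and nonzero, it equals $aH+b\E$ with $a>0$ and $-\eps a\le b\le 0$; and as $R(t)$ is not nef there is a curve $C$ in the support of $N(t)$ with $P(t)\cdot C=0$, i.e. $ad+bM=0$, so $d/M=-b/a\le\eps$. But $d/M\ge\eps$ by the definition of $\eps$, forcing $-b/a=\eps$, so that $P(t)$ is proportional to the fixed class $P_0=H-\eps\E$ for every such $t$ (and $\eps$ is automatically computed by a curve).

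Finally I would read off the chamber. Solving $P(t)\cdot N(t)=0$ with $P(t)\parallel P_0$ shows that, as a numerical class, $N(t)=(t-\eps)\,v$ for a single fixed vector $v\in V$. Thus for $t_1<t_2$ in the interval, rescaling the negative divisor of $R(t_1)$ by $(t_2-\eps)/(t_1-\eps)$ yields an effective divisor with the same (negative definite) support whose complement in $R(t_2)$ is the nef class $P(t_2)$; by uniqueness of Zariski decomposition this is the negative part of $R(t_2)$. Hence the support of $N(t)$ is constant on $(\eps,1/\sqrt s)$, which is therefore a single Zariski chamber, and together with the nef chamber the ray meets at most two. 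The hard part is exactly this middle step: a priori new negative curves could enter the support as $t$ grows, and the point that makes the argument work is that the slope inequality $d/M\ge\eps$ sandwiches $P(t)$ onto the Seshadri ray, forcing every contracted curve to compute $\eps$ and freezing the support.
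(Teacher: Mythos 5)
Your argument is correct in substance but follows a genuinely different route from the one in the paper. The paper splits into cases according to whether $\eps=\eps(\calo_{\P^2}(1);s)$ is maximal and whether a Seshadri-computing curve is homogeneous: in the homogeneous case it identifies the end of the big segment with $m/d$ and derives a contradiction from a hypothetical new wall, while in the non-homogeneous case it produces $s$ permuted Seshadri curves and invokes the fact that a negative definite set of curve classes in the rank-$(s+1)$ Picard lattice has at most $s$ elements, so the support of the negative part cannot grow. Your proof avoids the case split entirely: the monodromy argument (the same generality input the paper uses in Lemma \ref{lem:almost homog div}) confines the Zariski decomposition to the invariant plane $\langle H,\E\rangle$, the sandwich $\eps\le d/M=-b/a\le\eps$ pins the positive part onto the ray $\R_{>0}(H-\eps\E)$, and uniqueness of the Zariski decomposition then freezes the support of the negative part on the whole big-but-not-nef segment. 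This is cleaner, uniform, and explicitly identifies the positive part along the second chamber. One step should be tightened: you divide by $M$ and invoke $d/M\ge\eps$, so you must first rule out that a component of $N(t)$ is some $E_i$ or has $M\le 0$ (if some $E_i$ were a component then $b=0$, every component would have degree $0$, and $H-t\E=aH+\sum c_iE_i$ with $c_i\ge 0$ is impossible for $t>0$; then $b<0$ and $ad+bM=0$ force $M>0$). This is routine but not automatic.

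The one genuine error is the claim that $R(t)$ is big precisely for $t<1/\sqrt{s}$. This fails exactly in the case you are treating: when $\eps<1/\sqrt{s}$ one has $\mu(\calo_{\P^2}(1);\E)\ge 1/(s\eps)>1/\sqrt{s}$ — your own formula $N(t)=\frac{t-\eps}{1-s\eps^2}(s\eps H-\E)$ exhibits $s\eps H-\E$ as pseudo-effective, whence $H-t\E$ is pseudo-effective up to $t=1/(s\eps)$ (the paper makes the same point via $\frac1d C$ for a homogeneous Seshadri curve $C=dH-m\E$). As written, your proof therefore leaves the portion $1/\sqrt{s}\le t<\mu$ of the big ray unaccounted for, which is where a third chamber could a priori appear. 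Fortunately nothing in your argument uses the inequality $t<1/\sqrt{s}$ — only that $R(t)$ is big and not nef — so the fix is simply to run the identical argument for all $t\in(\eps,\mu)$.
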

\proof
   If $\eps=\eps(\calo_{\P^2}(1);s)=\frac1{\sqrt{s}}$ i.e. this multi-point Seshadri constant
   is maximal, then the ray crosses only the nef cone.

   If $\eps$ is submaximal, then there is a curve $C=dH-\sum m_iE_i$
   computing this Seshadri constant, i.e. $\eps=\frac{d}{\sum m_i}$.

   If this curve is homogeneous,
   i.e. $m=m_1=\dots=m_s$, then we claim first that $\mu=\mu(\calo_{\P^2}(1);\E)=m/d$.
   Indeed, this is an effective divisor on the ray $R$
   and it is not big (because big divisors on surfaces intersect
   all nef divisors positively, see \cite[Corollary 3.3]{BauSch12}),
   so it must be the point where the ray leaves the big cone.

   Now, suppose that for some $\eps<\delta<\mu$ the ray $R$ crosses
   another Zariski chamber wall. This means that there is a divisor $eH-k\E$
   (obtained after possible symmetrization of a curve $D$ with $0$ intersection
   number with $H-\delta\E$) with
   $$(H-\delta\E)\cdot(eH-k\E)=0.$$
   Hence $e=\delta ks<\mu ks$. On the other hand
   $$(eH-k\E)\cdot (dH-\mu\E)=e-k\mu s<0,$$
   implies that $C$ is a component $eH-k\E$ which is not possible.
   Hence there are only two Zariski chambers meeting the ray $R$
   in this case.

   If the curve $C$ is not homogeneous, then since the points are general
   there exist at least (and also at most) $s$ different irreducible
   curves computing $\eps$. All these curves are in the support
   of the negative part of the Zariski decomposition of $H-\lambda\E$
   for $\lambda>\eps$. Hence their intersection matrix
   is negative definite and this is a matrix of maximal dimension (namely $s$)
   with this property. This implies that $R$ cannot meet another Zariski
   chamber because the support of the negative part of Zariski decompositions
   grows only when encountering new chambers, see \cite{BKS04}.
\endproof

It is interesting to compare this result with the following easy example,
which constructs rays meeting a maximal number of chambers.
\begin{example}
   Keeping the notation from Proposition \ref{prop:only 2 chambers}
   let $L=(\frac{s(s+1)}{2}+1)H-E_1-2E_2-\ldots-sE_s$ is an ample divisor
   on $X$ and the ray $R=L+\lambda\E$ crosses $s+1=\rho(X)$ Zariski chambers.
   Indeed, with $\lambda$ growing, exceptional divisors $E_1, E_2,\ldots, E_s$
   join the support of the Zariski decomposition of $L-\lambda\E$ one by one.
   We leave the details to the reader.
\end{example}

   We conclude this section with the proof of Lemma
\ref{lem:estimate for cremona ok case}.

\proofof{Lemma \ref{lem:estimate for cremona ok case}}
   Since $f(1/\sqrt{s+1})=0$ it is enough to show
   that $f(\delta)$ is increasing for $1/\sqrt{s+1}\leq\delta\leq 1/\sqrt{s}$.
   Consider the derivative
   \begin{equation}\label{eq:pochodna f}
      f'(\delta)=s\left(1+\frac{\delta}{\sqrt{1-s\delta^2}}(s-2\sqrt{s+1})-\sqrt{s+1}\right).
   \end{equation}
   The function $h(\delta)=\frac{\delta}{\sqrt{1-s\delta^2}}$
   is increasing for $1/\sqrt{s+1}\leq\delta\leq 1/\sqrt{s}$
   since the numerator is an increasing function of $\delta$ and the
denominator is a decreasing function of $\delta$.
   We have $h(\frac{1}{\sqrt{s+1}})=1$ so that $h(\delta)\geq 1$ holds for all $\delta$.
   Since the coefficient at $h(\delta)$ in \eqnref{eq:pochodna f} is positive
   we have
   $$f'(\delta)\geq s\left(1+(s-2\sqrt{s+1})-\sqrt{s+1}\right)=1+s-3\sqrt{s+1}>0,$$
   which completes the proof.
\endproof

\section{Rationality of Seshadri constants and functions on Okounkov bodies}
   The theory of Newton--Okounkov bodies has emerged recently with work by
 Okounkov
   \cite{Ok96}, Kaveh--Khovanskii \cite{KavKh08}, and Lazarsfeld--Musta\c t\u a \cite{LazMus09}.
   Shortly thereafter, Boucksom--Chen \cite{BouChe11} and Witt-Nystr\"om \cite{Nys09} have shown ways of constructing geometrically significant
   functions on Okounkov bodies, that were further studied in \cite{KMS}.
   In the context of this note the study of Okounkov functions
   was pursued by the last three authors in \cite{KMS}. We refer to \cite{KMS}
   for construction and properties of Okounkov functions.

   In this section we consider an arbitrary smooth projective surface
$X$ and an ample line bundle $L$ on $X$.
   Let $p\in X$ be an arbitrary point and let $\pi:Y\to X$ be the
   blow up of $p$ with exceptional divisor $E$. Recall that
   the Seshadri constant of $L$ at $p$ can equivalently be defined as
\[
   \eps(L;p) = \sup\st{t>0\,|\, \pi^*L-tE \text{ is nef }}\ .
\]
   There is a related invariant
\[
   \mu(L;p) \deq \sup\st{t>0\,|\, \pi^*L-tE \text{ is pseudo-effective}} \equ \sup\st{t>0\,|\, \pi^*L-tE \text{ is big}} \ .
\]
The invariant $\eps(L;p)$ is the value of the parameter $\lambda$
where the ray $\pi^*L-\lambda E$ meets the boundary of the nef cone of $Y$,
and $\mu(L;p)$ is the value of $\lambda$ where the ray
meets the boundary of the pseudo-effective cone. The following relation
   between the two invariants is important in our considerations.
\begin{remark}\label{rmk:KLM}
   If $\epsilon(L;p)$ is irrational, then
\[
   \epsilon(L;p) \equ \mu(L;p)\ .
\]
   In particular, if $\mu(L;p)$ is rational, then so is $\epsilon(L;p)$.
\end{remark}
   Rationality of $\mu(L;p)$ implies rationality of the associated Seshadri
constants on surfaces. This invariant appears in the study of the concave
function $\varphi_{\ord_p}$ associated to the geometric valuation on $X$ defined
by the order of vanishing $\ord_p$ at $p$.
   We fix some flag $\ybul: X\supseteq C\supseteq \st{x_0}$ and consider
   the Okounkov body $\dybul(L)$ defined with respect to that flag.
   We define also a multiplicative filtration
   determined by the geometrical valuation $\ord_P$
   on the graded algebra $V=\oplus_{k\geq 0}V_k$
   with $V_k=H^0(X,kL)$
   by
   $$\calf_t(V)=\left\{s\in V:\; \ord_P(s)\geq t\right\},$$
   see \cite[Example 3.7]{KMS} for details. (All the above remains valid
   in the more general context of graded linear series.)
   There is an induced filtration $\fbul(V_k)$ on every summand of $V$
   and one defines the maximal jumping numbers of both filtrations as
   $$e_{\max}(V,\fbul)=\sup\left\{t\in\R:\; \exists k \calf_{kt}V_k\neq 0\right\}
     \;\mbox{ and }\;
     e_{\max}(V_k,\fbul)=\sup\left\{t\in\R:\; \calf_tV_k\neq 0\right\}$$
   respectively.
   Let $\varphi_{\ord_P}(x)=\phifbul(x)$ be
   the Okounkov function on $\dybul(L)$ determined by filtration $\fbul$,
   see \cite[Definition 4.8]{KMS}. It turns out that
   $\mu(L;p)$ is the maximum of the Okounkov function $\varphi_{\ord_P}$.
\begin{proposition}\label{prop:equiv}
 With notation as above we have that
\[
 \mu(L;p) \equ \limsup_{m\to\infty}\frac{\max\st{\ord_p(s)\,|\, s\in\HH{0}{X}{\OO_X(mL)}}}{m} \equ \max_{x\in\Delta_{Y_\bullet}(L)}\phi_{\ord_p}(x)\ .
\]
\end{proposition}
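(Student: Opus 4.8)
The plan is to establish the two asserted equalities separately. The right-hand equality, that the maximum of the Okounkov function $\phi_{\ord_p}$ over $\dybul(L)$ equals the maximal jumping number $\emax(V,\fbul)$ of the filtration, is a general feature of the concave transform construction, and I would simply invoke the relevant result from \cite{KMS}: by construction $\phi_{\ord_p}=\phifbul$ is the concave function on the body whose fibrewise behaviour is dictated by the filtered pieces $\calf_t V_k$, and its supremum over the body records exactly the top vanishing rate. It then remains to identify the middle expression with $\emax(V,\fbul)$ and with $\mu(L;p)$.

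For the middle expression the key observation is the standard identification
\[
\HH{0}{Y}{\OO_Y(m\pi^*L-jE)}\;\cong\;\st{s\in\HH{0}{X}{\OO_X(mL)} : \ord_p(s)\ge j}
\]
valid for every integer $j\ge 0$, coming from the fact that pulling back a divisor $D\in|mL|$ and subtracting $\mult_p(D)\,E$ yields its effective proper transform. Since $L$ is ample the right-hand set is nonempty for large $m$, and the quantity $a_m\deq\max\st{\ord_p(s) : s\in\HH{0}{X}{\OO_X(mL)}}$ is superadditive, $a_{m+m'}\ge a_m+a_{m'}$, because a product of sections vanishes to the sum of the orders. By Fekete's lemma the $\limsup$ defining the middle term is therefore a genuine limit equal to $\sup_m a_m/m$, and unwinding the definition of $\emax(V,\fbul)$ as $\sup\st{t : \calf_{kt}V_k\ne 0 \text{ for some }k}$ shows that this supremum is precisely $\emax(V,\fbul)$.

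The geometric heart of the statement is the equality of this growth rate with $\mu(L;p)$, and this is the step I expect to require the most care. Using the identification above together with the characterization $\mu(L;p)=\sup\st{t>0 : \pi^*L-tE \text{ is big}}$, I would argue by two inequalities. For $t<\mu(L;p)$ choose a rational $t'\in(t,\mu(L;p))$; then $\pi^*L-t'E$ is big, so $h^0(Y,\OO_Y(m\pi^*L-\lfloor mt'\rfloor E))$ grows and is nonzero for $m\gg 0$, producing sections of $mL$ vanishing to order $\ge\lfloor mt'\rfloor$, whence $\sup_m a_m/m\ge t'>t$; letting $t\to\mu(L;p)$ gives $\emax(V,\fbul)\ge\mu(L;p)$. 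Conversely, a section $s\in\HH{0}{X}{\OO_X(kL)}$ with $\ord_p(s)\ge kt$ exhibits the class $\pi^*L-tE=\tfrac1k\bigl(\pi^*(kL)-\ord_p(s)E\bigr)+\tfrac{\ord_p(s)-kt}{k}E$ as an effective $\R$-divisor, hence pseudo-effective, forcing $t\le\mu(L;p)$ and so $\emax(V,\fbul)\le\mu(L;p)$. The delicate points are matching the continuous parameter $t$ against the discrete orders of vanishing (the floors $\lfloor mt'\rfloor$ and the limiting procedure) and being careful to use bigness for the first inequality but merely pseudo-effectivity for the second; note that Remark \ref{rmk:KLM} is not invoked here, since we prove the identity directly for $\mu(L;p)$.
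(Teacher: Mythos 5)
Your proposal is correct and follows essentially the same route as the paper: identify the middle quantity with $\emax(V,\fbul)$ via the filtration, invoke the theorem that $\max_{x}\phifbul(x)=\emax(V,\fbul)$ (the paper uses its own Theorem~\ref{thm:maximum} rather than citing \cite{KMS}, but this is the same result), and relate the asymptotic vanishing order to pseudo-effectivity of $\pi^*L-tE$. Your treatment of the first equality is in fact more careful than the paper's, which asserts it in one line; your two-inequality argument and the Fekete/superadditivity remark fill in details the paper leaves implicit.
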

\begin{proof}
 Observe that
\[
 \ord_p (s) \equ \ord_E(\pi^*s) \equ \max\st{m\in\N\,|\, \divv(\pi^*s)-mE \text{ is effective}}\ .
\]
Consequently,
\begin{eqnarray*}
  \mu(L;p) &  = &  \sup\st{t\in\R_{\geq 0}\,|\, \pi^*L-tE \text{ is pseudo-effective}} \\
& = & \limsup_{m\to\infty} \frac{\max\st{\ord_p(s)\,|\,  s\in\HH{0}{X}{\OO_X(mL}}}{m}\ ,
\end{eqnarray*}
which gives the first equality.

For the second equality, we observe first that
\[
\max\st{\ord_p(s)\,|\, s\in\HH{0}{X}{\OO_X(mL)}} \equ e_{\max}(V_m,\calfbul)\ ,
\]
and hence
\[
\limsup_{m\to\infty}\frac{\max\st{\ord_p(s)\,|\, s\in\HH{0}{X}{\OO_X(mL)}}}{m} \equ e_{\max}(V,\calfbul)\ .
\]
Since
\[
 e_{\max}(V,\calfbul) \equ \max_{x\in\Delta_{Y_\bullet}(L)}\varphi_{\ord_p}(x)\
\]
by Theorem~\ref{thm:maximum}, we are done.
\end{proof}
\subsection{Independence of the maximum of an Okounkov function on the flag}

In the course of this section the projective variety $X$ can have arbitrary dimension.

   Boucksom and Chen proved that
   though $\phifbul$ and $\Delta(\vbul)$ depend on the flag $\ybul$,
   the integral of $\phifbul$ over $\Delta(\vbul)$ is independent of $\ybul$,
   \cite[Remark 1.12 (ii)]{BouChe11}. We prove now
   that the maximum of the Okounkov function does not depend on the flag.
   This fact is valid in the general setting of arbitrary
   multiplicative filtration $\calf$ defined on a graded linear series $\vbul$.

\begin{remark}
Note that in general the functions $\phifbul$ are only upper-semicontinuous and concave, but not continuous on the whole Newton--Okounkov body
as explained in \cite[Theorem 1.1]{KMS}. They are however continuous provided the underlying body  $\Delta(\vbul)$ is a polytope (see again \cite[Theorem 1.1]{KMS}),
which is the case for complete linear series on surfaces \cite{KLM}.
\end{remark}

\begin{theorem}[Maximum of Okounkov functions]\label{thm:maximum}
   With the above notation, we have that
   $$\max\limits_{x\in\Delta_{\ybul}(L)}\phifbul(x)=\emax(L,\fbul).$$
   In particular the left hand side does not depend on the flag $\ybul$.
\end{theorem}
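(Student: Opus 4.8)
The plan is to realize $\phifbul$ as the roof of an auxiliary Newton--Okounkov body living one dimension higher, and then recover the maximum by projecting to the filtration coordinate, where it becomes the supremum of normalized jumping numbers. Following the construction of the Okounkov function in \cite[Definition 4.8]{KMS}, for each $k\geq 1$ and each nonzero $s\in V_k$ write $w_k(s)=\sup\st{t\in\R:\; s\in\calf_t V_k}$ for its filtration weight and $\nuy(s)\in\R^n$ for its valuation vector, and form the normalized data points $\tfrac1k(\nuy(s),w_k(s))\in\R^n\times\R$. Let $\hat\Delta$ denote the closure of the union over all $k$ of these finite sets of points. Multiplicativity of the filtration, $\calf_s V_k\cdot\calf_t V_l\subseteq\calf_{s+t}V_{k+l}$, ensures that $\hat\Delta$ is a compact convex body whose projection to the first $n$ coordinates is $\dybul(L)$ and whose upper envelope is the graph of $\phifbul$; that is, $\phifbul(x)=\max\st{y:\;(x,y)\in\hat\Delta}$. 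Consequently $\max_{x\in\dybul(L)}\phifbul(x)$ is exactly the largest value attained by the last coordinate on $\hat\Delta$, and the whole theorem reduces to computing that value.

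First I would record the elementary reformulation of the right-hand side $\emax(L,\fbul)=\emax(V,\fbul)$. Directly from the definitions, for fixed $k$ one has $\sup\st{t:\;\calf_{kt}V_k\neq 0}=\tfrac1k\,\emax(V_k,\fbul)$, and taking the supremum over $k$ gives $\emax(V,\fbul)=\sup_{k\geq 1}\tfrac1k\,\emax(V_k,\fbul)$. In particular $\emax(V_k,\fbul)\leq k\,\emax(V,\fbul)$ for every $k$; since $L$ is ample this common bound is finite, so $\hat\Delta$ is indeed bounded. Because $\fbul$ is multiplicative the sequence $k\mapsto\emax(V_k,\fbul)$ is superadditive, so by Fekete's lemma the supremum above is in fact a limit, $\emax(V,\fbul)=\lim_{k\to\infty}\tfrac1k\,\emax(V_k,\fbul)$; this is what will guarantee attainment of the maximum at the end.

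With these preliminaries the two inequalities are short. For $\max\phifbul\leq\emax(V,\fbul)$, note that every generating point of $\hat\Delta$ has last coordinate $\tfrac{w_k(s)}{k}\leq\tfrac1k\emax(V_k,\fbul)\leq\emax(V,\fbul)$; since the last coordinate is continuous, the supremum over the closure $\hat\Delta$ does not exceed this bound. For the reverse inequality, for each $k$ choose $s_k\in V_k$ realizing (or nearly realizing) $w_k(s_k)=\emax(V_k,\fbul)$; then the point $x_k=\tfrac1k\nuy(s_k)$ lies in $\dybul(L)$ and $(x_k,\tfrac1k\emax(V_k,\fbul))\in\hat\Delta$, whence $\phifbul(x_k)\geq\tfrac1k\emax(V_k,\fbul)$. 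Taking the supremum over $k$ and using the reformulation above yields $\max_{x}\phifbul(x)\geq\emax(V,\fbul)$, and the two bounds combine to the desired equality. Attainment of the maximum follows from compactness of $\dybul(L)$ together with upper semicontinuity of $\phifbul$, and flag-independence is then immediate because $\emax(V,\fbul)$ is defined purely in terms of the filtered algebra $(V,\fbul)$, with no reference to $\ybul$.

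The step I expect to be the main obstacle is the passage through the closure in the reverse inequality. The witnesses $x_k$ need not converge inside $\dybul(L)$, so one cannot simply evaluate $\phifbul$ at a limit point; the clean argument works at the level of the last coordinate, using that the supremum over the generating set equals the supremum over its closure, and then invokes the Fekete limit to promote this supremum to an attained maximum. The only place where genuine care is required is in justifying that the normalized weights $\tfrac{w_k(s_k)}{k}$ are not diminished upon forming the closure defining $\hat\Delta$ --- equivalently, that the upper-semicontinuous concave function $\phifbul$ really does attain, somewhere on the compact body, the value $\sup_k\tfrac1k\emax(V_k,\fbul)$ --- and this is exactly what superadditivity of the jumping numbers delivers.
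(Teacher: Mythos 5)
Your proof is correct in substance but follows a genuinely different route from the paper's. The paper works directly with the nested family of partial Okounkov bodies $\Delta_{t,\ybul}(L)$ attached to the graded subseries $V_{t,k}=\calf_{kt}(H^0(kL))$: the inequality $\phifbul(x)\leq\emax$ is immediate from the definition $\phifbul(x)=\sup\{t\,:\,x\in\Delta_{t,\ybul}(L)\}$, and the reverse inequality \emph{together with attainment} comes in one stroke from Cantor's intersection theorem --- a point $y$ in $\bigcap_{t\,:\,\Delta_{t,\ybul}(L)\neq\emptyset}\Delta_{t,\ybul}(L)$ satisfies $\phifbul(y)=\sup\{t:\Delta_{t,\ybul}(L)\neq\emptyset\}=\emax(L,\fbul)$, so an explicit maximizer is produced. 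You instead pass through the Boucksom--Chen realization of $\phifbul$ as the roof of a body $\hat\Delta$ one dimension up, extract witness sections $s_k$ with $w_k(s_k)=\emax(V_k,\fbul)$ to get $\phifbul(x_k)\geq\tfrac1k\emax(V_k,\fbul)$, identify $\sup_k\tfrac1k\emax(V_k,\fbul)$ with $\emax(V,\fbul)$ via Fekete, and then argue attainment separately from upper semicontinuity plus compactness. Both arguments work; yours buys an explicit link to the superadditive jumping numbers, while the paper's is shorter and yields the maximizing point directly without invoking semicontinuity or Fekete's lemma. Two small caveats on your write-up: the closure of the set of \emph{top} data points $\tfrac1k(\nuy(s),w_k(s))$ is not convex as stated (multiplicativity only gives points lying \emph{above} convex combinations in the last coordinate), so $\hat\Delta$ should be the closed convex hull or the full normalized subgraph --- though your two key inequalities survive without the roof description, since $\phifbul(x_k)\geq\tfrac1k w_k(s_k)$ follows directly from $x_k\in\Delta_{t,\ybul}(L)$ for $t=\tfrac1k w_k(s_k)$; and attainment is delivered by upper semicontinuity and compactness, not by superadditivity, contrary to your closing remark.
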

\proof
 For any real $t\geq 0$,
we consider the partial Okounkov body
$\Delta_{t,\ybul}(L)$  associated the graded  linear series $V_{t,k}\subset H^0(kL)$ given by
 \[
 V_{t,k} \deq  \calf_{kt}(H^0(kL))\ .
 \]
Note that by definition
\[ \emax(L,\fbul)= \sup\{t\in \R| \cup_k V_{t,k}\neq 0.\}\]
In other words,
\[ \emax(L, \fbul)= \sup\{t\in \R| \Delta_{t,\ybul}(L)\neq \emptyset\}.\]
Recall that by definition
\[ \phifbul(x)=\sup\{ t\in \R| x\in \Delta_{t,\ybul}(L)\}.\]
and it is therefore immediate that $\forall x$
\[ \phifbul(x)\leq \emax(L, \fbul).\]
from which it follows that
\[ \max\limits_{x\in\Delta_{\ybul}(L)}\phifbul(x)\leq \emax(L, \fbul).\]
Since the bodies $\Delta_{t,\ybul}(L)$ form a decreasing family of closed subsets
of $\R^d$, we have that
\[ \cap_{t| \Delta_{t,\ybul}(L)\neq \emptyset} \Delta_{t,\ybul}(L)\neq \emptyset.\]
Consider a point $y\in \cap_{t| \Delta_{t,\ybul}(L)\neq \emptyset} \Delta_{t,\ybul}(L)$
We then have that
\[ y\in \Delta_{t,\ybul}(L)\Leftrightarrow \Delta_{t,\ybul}(L)\neq \emptyset\]
and hence
\[ \sup\{ t\in \R| y\in \Delta_{t,\ybul}(L)\}=
\sup\{t\in \R| \Delta_{t,\ybul}(L)\neq \emptyset\}\]
or in other words
\[\phifbul(y)=\emax(L, \fbul)\]
from which it follows that
\[ \max\limits_{x\in\Delta_{\ybul}(L)}\phifbul(x)\leq \emax(L, \fbul).\]
This completes the proof of the theorem.
\endproof

\section{The effect of blowing up on Okounkov bodies and functions}
   We begin with an observation (valid in fact in arbitrary dimension, though
we state and prove it here only for surfaces.)

\begin{proposition}\label{prop:restricted body and ftion}
   Let $S$ be an arbitrary surface
   with a fixed flag $\ybul$ and let $f:X\to S$ be the blow up of $S$ at a point $P$
   not contained in the divisorial part of the flag, $P\notin Y_1$. Let $E$ be the
   exceptional divisor. And finally let $D$ be a big divisor on $S$.
   For any rational number $\lambda$ such that
   $0\leq \lambda < \mu(L;P)$ we let $D_{\lambda}$ be the $\Q$--divisor
   $f^*D-\lambda E$. There is then a natural
   inclusion
   $$\Delta_{\ybul}(D_{\lambda})\subset \Delta_{\ybul}(D).$$
   Moreover, a filtration $\fbul$ on the graded algebra $\oplus_{k\geq 0}H^0(S;kD)$
   induces a filtration $\fbul^{\lambda}$ on the graded (sub)algebra $\oplus H^0(X,kD_{\lambda})$,
   where the sum is taken over all $k$ divisible enough.
   For associated Okounkov functions we have
   \begin{equation}\label{eq:ok ftions compare}
      \varphi_{\fbul^{\lambda}}(x)\leq \varphi_{\fbul}(x)
   \end{equation}
   for all $x\in\dybul(D_{\lambda})$.
\end{proposition}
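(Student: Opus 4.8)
The plan is to realize the graded algebra attached to $D_\lambda$ as a graded subalgebra of the one attached to $D$, transport the flag and its valuation through the blow-up, and then read off both the inclusion of bodies and the inequality of functions from the monotonicity of the Okounkov construction under inclusion of graded linear series.

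First I would record the standard identification of sections under a blow-up. For every $k$ divisible enough that $k\lambda\in\Z$, pulling back and subtracting $k\lambda E$ gives a multiplication-compatible isomorphism
$$\HH{0}{X}{\OO_X(kD_\lambda)}\;\cong\;\st{s\in\HH{0}{S}{\OO_S(kD)}:\mult_P s\ge k\lambda}\ ,$$
so that $\bigoplus_k\HH{0}{X}{\OO_X(kD_\lambda)}$ is a graded subalgebra of $\bigoplus_k\HH{0}{S}{\OO_S(kD)}$. Because $P\notin Y_1$, the flag $\ybul$ lifts to $X$ (its divisorial part being the strict transform of $Y_1$, which is isomorphic to $Y_1$ away from $E$), and the value $\nu_{\ybul}(s)$ of a section is computed entirely in a neighbourhood of $Y_1\supseteq\st{x_0}$, a locus untouched by the blow-up. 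Hence under the identification above the flag valuation is unchanged, $\nu_{\ybul}(\tilde s)=\nu_{\ybul}(s)$, so the valuative image of the subalgebra sits inside that of the full algebra; normalising by $k$ and passing to closures yields the asserted inclusion $\dybul(D_\lambda)\subseteq\dybul(D)$. The induced filtration is then the evident restriction: under the identification set $\calf_t^\lambda\deq\calf_t\cap\HH{0}{X}{\OO_X(kD_\lambda)}$, which is again multiplicative.

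Finally, for the inequality of Okounkov functions I would use the partial-body description recalled in the proof of Theorem~\ref{thm:maximum}. For each real $t\ge 0$ put $V_{t,k}=\calf_{kt}(\HH{0}{S}{\OO_S(kD)})$ and $V_{t,k}^\lambda=\calf_{kt}^\lambda(\HH{0}{X}{\OO_X(kD_\lambda)})$; by construction $V_{t,k}^\lambda=V_{t,k}\cap\HH{0}{X}{\OO_X(kD_\lambda)}\subseteq V_{t,k}$, so $V_{t,\bullet}^\lambda$ is a graded sub-series of $V_{t,\bullet}$. Since the valuation is unchanged, the associated partial bodies satisfy $\Delta_{t,\ybul}(D_\lambda)\subseteq\Delta_{t,\ybul}(D)$. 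Recalling $\varphi_{\fbul}(x)=\sup\st{t\in\R:x\in\Delta_{t,\ybul}(D)}$ and likewise for $\varphi_{\fbul^\lambda}$, the inclusion $\st{t:x\in\Delta_{t,\ybul}(D_\lambda)}\subseteq\st{t:x\in\Delta_{t,\ybul}(D)}$ forces the suprema to compare in the same direction, giving $\varphi_{\fbul^\lambda}(x)\le\varphi_{\fbul}(x)$ for every $x\in\dybul(D_\lambda)$ — which is meaningful precisely because of the inclusion of bodies proved above. The main technical point to verify carefully is this invariance of the flag valuation under the blow-up, namely that the identification of sections is genuinely valuation-preserving; once that is secured, everything else is pure monotonicity of the Okounkov-body construction under inclusion of graded linear series.
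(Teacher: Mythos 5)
Your proposal is correct and follows essentially the same route as the paper's proof: identify $H^0(X,kD_\lambda)$ with the subspace of sections of $kD$ vanishing to order at least $k\lambda$ at $P$, note that the flag (and hence its valuation) lifts unchanged because $P\notin Y_1$, and deduce both the inclusion of bodies and the inequality of functions from monotonicity under inclusion of graded linear series. Your extra care about the valuation-preserving nature of the identification and the phrasing via partial bodies $\Delta_{t,\ybul}$ only makes explicit what the paper leaves implicit.
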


\begin{remark}
The best case scenario is that the functions $\phi$ are piecewise linear with
rational coefficients
over a rational polytope. Of these properties, some evidence for the
first was given by Donaldson \cite{Don1} in the toric situation.
For the second condition, it was proven in \cite{AKL} that every line bundle
on a surface has an Okounkov body which is a rational polytope.
\end{remark}

\proof
Note first that since the blow up center is disjoint
   from all elements in the flag, one can take $\ybul$ to be
an admissible flag on $X$.
(Strictly speaking one takes $f^*\ybul$ as the flag, but it should cause no
   confusion to identify flag elements upstairs and downstairs.)

   Then, if $k$ is sufficiently divisible we have
   $$H^0(X,f^*kD-k\lambda E)=\left\{s\in H^0(S,kD):\; \ord_P(s)\geq E\right\}\subset H^0(S,kD).$$
The inclusion of the Okounkov bodies follows immediately under this
identification. We can view the algebra associated to $f^*D-E$ as a graded
linear series on $S$.

   The claim about the Okounkov functions follows from their definition,
see \cite[Definition 4.8]{KMS}. Indeed,  the supremum arising in the
   definition of $\varphi_{\fbul^{\lambda}}$ is taken
   over a smaller set of sections than it is for $\varphi_{\fbul}$.
\endproof
   The following examples illustrate various situations arising in the
   setting of Proposition \ref{prop:restricted body and ftion}.
\begin{example}\label{ex:p2}\rm
   Let $\ell$ be a line in $X_0=\P^2$ and let $P_0\in\ell$ be a point.
   We fix the flag
   $$\ybul:\; X_0\supset\ell\supset\left\{P_0\right\}.$$
   Let $D=\calo_{\P^2}(1)$. Then $\dybul(D)$ is simply the standard simplex in $\R^2$.
\unitlength.1mm
\begin{center}
\begin{pgfpicture}{0cm}{0cm}{6cm}{6cm}
   \pgfline{\pgfxy(0.5,1)}{\pgfxy(5.5,1)}
   \pgfline{\pgfxy(1,0.5)}{\pgfxy(1,5.5)}
   \pgfline{\pgfxy(5,1)}{\pgfxy(1,5)}
   \pgfputat{\pgfxy(5,0.9)}{\pgfbox[center,top]{1}}
   \pgfputat{\pgfxy(0.9,0.9)}{\pgfbox[right,top]{0}}
   \pgfputat{\pgfxy(0.9,5)}{\pgfbox[right,center]{1}}
   \pgfputat{\pgfxy(2.9,2)}{\pgfbox[right,center]{$\dybul(D)$}}
\end{pgfpicture}
\end{center}
   Let $\fbul$ be the filtration on the complete linear series of $D$
   imposed by the geometric valuation $\nu=\ord_{P_0}$ and let
   $\varphi_{\nu}$ be the associated Okounkov function.
   Then
   $$\varphi_{\nu}(a,b)=a+b.$$
   Indeed, given a point $(a,b)$ with rational coordinates, we
   pass to the integral point $(ka,kb)$. This valuation vector
   can be realized geometrically by a global section in $H^0(\P^2,\calo_{\P^2}(k))$
   vanishing exactly with multiplicity $ka$ along $\ell$, exactly
   with multiplicity $kb$ along a line passing through $P_0$ different from $\ell$
   and along a curve of degree $k(1-a-b)$ not passing through $P_0$.
\end{example}
   The next example shows that even when the Okounkov body changes in the course
   of blowing up, the Okounkov function may remain the same.
\begin{example}\label{ex:ps blown up in 1 point}\rm
   Keeping the notation from the previous Example and from Proposition \ref{prop:restricted body and ftion}
   let $f:X_1=\Bl_{P_1}\P^2\to X_0=\P^2$ be the blow up of the projective plane
   in a point $P_1$ not contained in the flag line $\ell$ with the exceptional divisor $E_1$.
   We work now
   with a $\Q$--divisor $D_{\lambda}=f^*(\calo_{\P^2}(1))-\lambda E_1=H-\lambda E_1$,
   for some fixed $\lambda\in [0,1]$. A direct computation using \cite[Theorem 6.2]{LazMus09} gives that the Okounkov body has the shape
\begin{center}
\begin{pgfpicture}{0cm}{0cm}{6cm}{6cm}
   \pgfline{\pgfxy(0,1)}{\pgfxy(6,1)}
   \pgfline{\pgfxy(1,0)}{\pgfxy(1,6)}
   \pgfline{\pgfxy(1,5)}{\pgfxy(3,3)}
   \pgfline{\pgfxy(3,1)}{\pgfxy(3,3)}
   \pgfputat{\pgfxy(3,0.9)}{\pgfbox[center,top]{$1-\lambda$}}
   \pgfputat{\pgfxy(0.9,0.9)}{\pgfbox[right,top]{$0$}}
   \pgfputat{\pgfxy(0.9,5)}{\pgfbox[right,center]{$1$}}
   \pgfputat{\pgfxy(2.7,2)}{\pgfbox[right,center]{$\dybul(D_{\lambda})$}}
\end{pgfpicture}
\end{center}
   Thus we see that the Okounkov body of $D_{\lambda}$
   is obtained from that of $D$ by intersecting with a closed halfspace.

   For the valuation $\nu=\ord_{P_0}$, we get as above
   $$\varphi_{\nu}(a,b)=a+b.$$
   Let now $k$ be an integer such that the point $(ka,kb)$ is integral and $k\lambda$ is also an integer.
   Now we need to exhibit a section $s$ in $H^0(\P^2,\calo_{\P^2}(k))$
   satisfying the following conditions:
   \begin{itemize}
      \item[a)] $s$ vanishes along $\ell$ exactly to order $a$;
      \item[b)] $s$ vanishes in the point $P_1$ to order at least $k\lambda$;
      \item[c)] $s$ vanishes in the point $P_0$ exactly to order $b$.
   \end{itemize}
   We let the divisor of $s$ to consist of $a$ copies of $\ell$ (there is no other
   choice here), of $b$ copies of the line through $P_0$ and $P_1$,
   of $k\lambda-b$ copies of any other line passing through $P_1$ (if this
   number is negative then this condition is empty) and of a curve of degree $k(1-a-\max\left\{b,\lambda\right\})$
   passing neither through $P_0$, nor through $P_1$.
\end{example}
\begin{remark}
   Note that in the setting of Proposition \ref{prop:restricted body and ftion}
   Okounkov bodies of divisors $D_{\lambda}$ always result from those of $D$
   by cutting with finitely many halfplanes. This is an immediate consequence
   of \cite[Theorem 5]{KLM}.
\end{remark}
   We conclude by showing that the inequality in \eqnref{eq:ok ftions compare}
   can be sharp, i.e. the blow up process can influence the Okounkov function
as well as the Okounkov body.
\begin{example}
   Keeping the notation from the previous examples, let
   $f:X_6\to \P^2$ be the blow up of six general points $P_1,\dots,P_6$
   not contained in $\ell$ and chosen so that the points $P_0,P_1,\dots,P_6$
   are also general. Let $E_1,\dots,E_6$ denote the exceptional divisors
   and set $\E=E_1+\ldots+E_6$. We consider the divisor $D=H-\frac25\E$.
   A direct computation using \cite[Theorem 6.2]{LazMus09}
   (this requires computing Zariski decompositions this time, see \cite{Bau09} for an effective approach)
   yields the triangle with vertices at the origin and in points $(0,1)$
   and $(1/25,0)$ as $\dybul(D)$. For the valuation $\nu=\ord_{P_0}$ we get now
   \begin{equation}\label{eq:ok ftion less a plus b}
      \varphi_{\nu}(a,b)\leq 4/15 <a+b
   \end{equation}
   for $(a,b)\in\Omega=\left\{(x,y)\in\R^2:\; x\in[0,11/360)\mbox{ and } b\in (4/15-a,1-25a]\right\}$.
\begin{center}
\begin{pgfpicture}{0cm}{0cm}{6cm}{6cm}
   \pgfline{\pgfxy(0.5,1)}{\pgfxy(3.5,1)}
   \pgfline{\pgfxy(1,0.5)}{\pgfxy(1,5.5)}
   \pgfline{\pgfxy(3,1)}{\pgfxy(1,5)}
   \pgfline{\pgfxy(3,2)}{\pgfxy(1,2.3)}
   \pgfputat{\pgfxy(3,0.9)}{\pgfbox[center,top]{$\frac{1}{25}$}}
   \pgfputat{\pgfxy(2.4,2.15)}{\pgfbox[left,bottom]{$(\frac{11}{360},\frac{17}{72})$}}
   \pgfputat{\pgfxy(0.9,0.9)}{\pgfbox[right,top]{0}}
   \pgfputat{\pgfxy(0.9,5)}{\pgfbox[right,center]{1}}
   \pgfputat{\pgfxy(1.7,3)}{\pgfbox[right,center]{$\Omega$}}
\end{pgfpicture}
\end{center}
   The reason for the above inequality is the following. Let $(a,b)\in\Omega$ be a valuation vector.
   Assume to the contrary
   that $\varphi(a,b)>4/15$. It is well known that $\eps(\calo_{\P^2}(1),P_0,\dots,P_6)=\frac38$,
   see for instance \cite[Example 2.4]{StrSze04}. On the other hand a section
   with the above valuation vector would have (after scaling to $\calo(1)$)
   multiplicities $2/5$ at $P_1,\dots,P_6$ and $\varphi_{\nu}(a,b)>4/15$ at $P_0$.
   It would give Seshadri quotient
   $$\frac{1}{6\cdot\frac25+\varphi_{\nu}(a,b)}<\frac38,$$
   a contradiction. This proves \eqnref{eq:ok ftion less a plus b}.
\end{example}
   Since the SHGH Conjecture holds for $9$ points, the first challenge
   arising in the view of our Theorem would be to compute the Okounkov
   body and the Okounkov function associated to $\ord_{P_0}$ as above
   for the system
   $$22H-7(E_1+\dots+E_9).$$



\bigskip \small

\bigskip
   Marcin Dumnicki,
   Jagiellonian University, Institute of Mathematics, {\L}ojasiewicza 6, PL-30-348 Krak\'ow, Poland

\nopagebreak
   \textit{E-mail address:} \texttt{Marcin.Dumnicki@im.uj.edu.pl}

\bigskip
   Alex K\"uronya,
   Budapest University of Technology and Economics,
   Mathematical Institute, Department of Algebra,
   Pf. 91, H-1521 Budapest, Hungary.

\nopagebreak
   \textit{E-mail address:} \texttt{alex.kuronya@math.bme.hu}

\medskip
   \textit{Current address:}
   Alex K\"uronya,
   Albert-Ludwigs-Universit\"at Freiburg,
   Mathematisches Institut,
   Eckerstra{\ss}e 1,
   D-79104 Freiburg,
   Germany.

\bigskip
   Catriona Maclean,
   Institut Fourier, CNRS UMR 5582   Universit\'e de Grenoble,
   100 rue des Maths,
   F-38402 Saint-Martin d'H\'eres cedex,  France

\nopagebreak
   \textit{E-mail address:} \texttt{catriona@fourier.ujf-grenoble.fr}

\bigskip
   Tomasz Szemberg,
   Instytut Matematyki UP,
   Podchor\c a\.zych 2,
   PL-30-084 Krak\'ow, Poland.

\nopagebreak
   \textit{E-mail address:} \texttt{szemberg@up.krakow.pl}


\end{document}